\numberwithin{equation}{section}
\theoremstyle{plain}
\newtheorem{thm}{Theorem}[section]
\newtheorem{prop}{Proposition}[section]
\DeclareMathOperator*{\argmin}{argmin}
\DeclareMathOperator*{\rank}{rank}
\def\Det#1{\left|#1\right|}
\begin{document}

\begin{frontmatter}
\title{A note on the existence of the maximum likelihood estimate
in variance components models}

\runtitle{Variance components models}

\begin{aug}
\author{\fnms{Mariusz} \snm{{Grządziel}\ead[label=e1]{mariusz.grzadziel@up.wroc.pl}}}
\and
\author{\fnms{Andrzej} \snm{Michalski}\ead[label=e2]{andrzej.michalski@up.wroc.pl}}
\address{
Department of Mathematics,\\
Wroc\l aw University of Environmental and Life Sciences\\
Grunwaldzka 53, 50 357 Wroc\l aw, Poland
\printead{e1,e2}}
%

\runauthor{M.~Grządziel and A.~Michalski}

\affiliation{Wroc\l aw University of Environmental and Life Sciences}

\end{aug}

\begin{abstract}
In the paper, the problem of the existence of the maximum likelihood estimate 
and the REML estimate
in the variance components model is considered.
Errors in the proof of Theorem~3.1 in 
the article of Demidenko and Massam (Sankhy\=a 61, 1999), giving a necessary 
and sufficient condition 
for the existence of the maximum likelihood estimate in this model, are pointed 
out and corrected. A new proof of  Theorem~3.4 in the Demidenko and Massam's 
article, concerning the existence of the REML estimate of variance components, 
is presented.
\end{abstract}

\begin{keyword}[class=AMS]
\kwd[Primary ]{62J05}
\kwd[; secondary ]{62F10}
\end{keyword}

\begin{keyword}
\kwd{variance component}
\kwd{linear mixed model}
\kwd{maximum likelihood}
\end{keyword}
\tableofcontents
\end{frontmatter}

\section{Introduction} 
The paper of Demidenko and Massam \cite{DM99} gives a definitive answer to the problem of the existence of the maximum likelihood estimate in the variance components model. 
Checking the necessary and sufficient condition for its existence 
given in this paper
should be done before using numerical procedures for computing it.
Such a check should be also performed (for stronger reasons!) if we are going to use the algebraic methods for computing this estimate: In this approach all critical points of the log-likelihood function are computed via solving a system of polynomial equations, see the paper of Gross \textit{et. al.} \cite{GDP12}.

The main purpose of this paper is to point out errors in the proof of Theorem~3.1 in \cite{DM99}, giving a necessary and sufficient condition
for the existence of the maximum likelihood estimate in the variance components model, and to correct them. 
We also show that 
the result of
this theorem
can be extended to the case when  the design matrix  is rank deficient. In consequence, we obtain a new proof of Theorem~3.4 in \cite{DM99} which gives a necessary and sufficient condition for the existence of the restricted maximum likelihood (REML) estimate in the variance components model.

The paper is organized as follows.  
In Section \ref{S:Existence},
after recalling 
the Demidenko and Massam's theorem
\cite[Theorem~3.1]{DM99},
we point out two errors in its proof
given by the authors and we correct these errors. The problem of the existence of the REML estimate is 
considered  in Section \ref{S:REML}. 

\subsection{Notation}
For vector $ \bm{x} \in \mathbb{R}^{n} $ we will denote its Euclidean norm by $ \left\| \bm{x} \right\|  $.
For a given $m \times n$ matrix $\bm{A}$, we will denote
by $\bm{A}^{\prime}$ its transpose,
by $\rank(\bm{A})$ its rank
and  by $\mathcal{M}(\bm{A})$ the space spanned by the columns of $ \bm{A} $.
For the given matrices $\bm{A}_{1},\bm{A}_{2},\ldots,\bm{A}_{p}$
of dimension $m \times n_{1}, m \times n_{2}, \dots, m \times n_{p}$, respectively, we will denote by $ [\bm{A}_{1},\bm{A}_{2},\dots,\bm{A}_{p}] $ the partitioned  $m \times (n_{1}+n_{2}+\ldots+n_{p})$  matrix consisting of  $ \bm{A}_{1}, \bm{A}_{2},\ldots, \bm{A}_{p} $.
For brevity we will write 
$\mathcal{M}(\bm{A}_{1},\bm{A}_{2},\dots,\bm{A}_{p})$ instead of 
$\mathcal{M}([\bm{A}_{1},\bm{A}_{2},\dots,\bm{A}_{p}])$.
We will write
$|\bm{B}|$ for the determinant of a square matrix $\bm{B}$,
$\bm{I}_{n}$ for the 
identity matrix of order $n$.
The matrix of order $ m \times n $ in which every entry is equal to $ 0 $ we will denote by $ \bm{0}_{m \times n} $. If the order of the matrix is clear from the context we will denote the zero matrix simply by $ \bm{0} $.
The $ n $-dimensional vector having all coordinates equal to $ 1 $ we will  denote by $ \bm{1}^{(n)} $,
the  $ n $-dimensional vector having all coordinates equal to $ 0 $ we will denote by $ \bm{0}^{(n)} $.

For an $ m \times n  $ matrix $\bm{G} $ 
and a subspace $ \mathcal{V} $
of $ \mathbb{R}^{n} $, we will denote by $\bm{G}\mathcal{V} $ the image of the transformation corresponding to $ \bm{G} $ of the subspace $ \mathcal{V} $.  We will use the notation $\bm{Y} \sim  \mathcal{N}(\bm{\mu}, \bm{\Sigma})$ if the random vector $ \bm{Y} $ has the multivariate normal distribution with the mean vector $ \bm{\mu} $ and
the variance-covariance matrix $\bm{ \Sigma} $.
For a real-valued function $f$ with domain $S$
we define
\[ \argmin_{x \in S}f(x):=\{z \in S: f(z) \le f(x) \, \text{for all} \, x \in S \}   . \]

\section{The existence of the maximum likelihood estimate} \label{S:Existence}
The variance components model
considered in the Demidenko and Massam's paper \cite{DM99} can be expressed in the form
\begin{equation} \label{E:ModelVarComp}
\bm{Y}=\bm{X}\bm{\beta}+\sum_{i=1}^{r}\bm{Z}_{i} \bm{u}_{i}+\bm{\epsilon},
\end{equation}
where 
$ \bm{Y} $ is an $n \times 1$ random vector,
$\bm{X}$ is an $n \times m$ known design matrix of full column rank, $m<n$,
$\bm{\beta}$ is an $m \times 1$ vector of fixed parameters,
$ \bm{Z}_{i} $  are known  
$ n \times k_{i} $ matrices
such that 
$ 
\sum_{i=1}^{r} k_{i}<n
$
and
$ \bm{u}_{i} \sim \mathcal{N}(\bm{0},\sigma_{i}^{2}\bm{I}_{k_{i}})$, 
$ i=1,\ldots,r $,
while 
$ \bm{\epsilon} \sim \mathcal{N}(\bm{0},\sigma_{0}^{2} \bm{I}_{n})$. 
We assume that all random terms are stochastically independent. The parameter space is equal to
\begin{equation}
\bm{\Theta} :=   \{ \bm{\theta}:=(\bm{\beta}, \sigma_{0}^{2}, \sigma_{1}^{2},\ldots,\sigma_{r}^{2})=(\bm{\beta},\bm{\sigma}^{2}) \in \mathbb{R}^{m} \times (0,\infty) \times [0,\infty)^{r} \},
\end{equation}
where 
$ \bm{\sigma}^{2} := (\sigma_{0}^{2},\sigma_{1}^{2},\ldots,\sigma_{r}^{2}) $ is the vector of variance components.
It can be seen that
the covariance matrix of the vector $ \bm{Y} $ 
can be expressed as
\[ 
\bm{V}(\bm{\sigma}^{2}):=\bm{ \sigma}_{0}^{2}\bm{I}_{n}+\sum\limits_{i=1}^{r}\sigma_{i}^{2}\bm{Z}_{i}\bm{Z}_{i}^{\prime}.
\]
Minus twice the log-likelihood function is given, up to an additive constant, by
\begin{equation} \label{E:Loglikelihood}
l(\bm{\beta},\bm{\sigma}^{2},\bm{Y}):=\log\Det{\bm{V}(\bm{\sigma}^{2})}+(\bm{Y}-\bm{X}\bm{\beta})^{\prime}\bm{V}^{-1}(\bm{\sigma}^{2})(\bm{Y}-\bm{X}\bm{\beta}). 
\end{equation}
For a given realization $ \bm{y} $ of the observation vector $ \bm{Y} $ the 
maximum likelihood estimate of  $\bm{\theta} \in \bm{\Theta}  $ is defined as
\begin{equation} \label{E:MLEstimate0}
    \argmin_{\bm{\tau} \in \bm{ \Theta}} {l(\bm{\tau},\bm{y})}.
\end{equation}
Put 
\begin{align}
\kappa_{0} &:= \sigma_{0}^{2}, \quad  \kappa_{i}  := \frac{\sigma_{i}^{2}}{ \sigma_{0}^{2}}, \quad 1=1,2,\dots,r, \label{E:param1}    \\
\bm{\kappa}  &:= (\kappa_{1},\ldots,\kappa_{r}), \quad
\bm{\tilde V}( \bm{\kappa})  := \bm{I}_{n}+ \sum\limits_{i=1}^{r} \kappa_{i} \bm{Z}_{i}\bm{Z}_{i}^{\prime}. \label{E:param2}
\end{align}
Since $ \bm{V}(\bm{\sigma}^{2})=\kappa_{0} \bm{\tilde V}(\bm{\kappa}) $,
we will refer to $  \bm{\tilde V}(\bm{\kappa}) $ as the scaled covariance matrix of the vector $ \bm{Y} $. 
The parameter space 
and the counterpart of the function $ l $
corresponding to this new parametrization are given by
\begin{align}
\bm{\tilde \Theta} & := 
\{ \bm{ \tilde \theta } := (\bm{\beta},\kappa_{0}, \kappa_{1},\ldots,\kappa_{r}) \in \mathbb{R}^{m} \times (0,\infty) \times [0,\infty)^{r} \},\\
\label{E:LoglikelihoodAlt}
\tilde l(\bm{\beta},\bm{\kappa},\bm{Y})&:=n \log \kappa_{0}+\log|\bm{\tilde V(  \bm{\kappa})}|+\kappa_{0}^{-1}(\bm{Y}-\bm{X}\bm{\beta})^{\prime}\bm{\tilde V}^{-1}(\bm{\kappa})(\bm{Y}-\bm{X}\bm{\beta}).
\end{align}
The function $\tilde l$ is equal, up to an additive constant, to minus twice the log-likelihood function 
expressed in terms of $\bm{\beta},\bm{\kappa}$ and $\bm{Y}  $.
If $ \bm{y} $ is a given  realization of the observation vector $ \bm{Y} $, then
the maximum likelihood estimate of $\bm{ \tilde \theta} \in \bm {\tilde \Theta}$
can be expressed as 
\begin{equation} \label{E:MLEstimate}
    \argmin_{\bm{\tau} \in \bm{\tilde \Theta}} {\tilde l(\bm{\tau},\bm{y})}.
\end{equation}
We will say that the maximum likelihood estimate of the parameters
exists for the given realization $ \bm{y} $ of the observation vector $ \bm{Y} $ 
if and 
only if the set
(\ref{E:MLEstimate})
(or the set (\ref{E:MLEstimate0})) is not empty.

A definitive answer to the existence problem in the variance components model is given  by Demidenko and Massam in  \cite[Theorem 3.1]{DM99}. 
Before stating it, let us define the following quantities:
\begin{align}
\bm{Z}:=[\bm{Z}_{1},\bm{Z}_{2},\dots,\bm{Z}_{r}], \quad
M&:=\mathcal{M}(\bm{X}), \quad H:=\mathcal{M}(\bm{Z}),
\label{E:ZMH} \\ s_{X,Z}&:=\bm{y}^{\prime}\bm{P}_{(H+M)^{\perp}}\bm{y},
\end{align}
where $ \bm{P}_{(H+M)^{\perp}} $ stands for the orthogonal projection from $ 
\mathbb{R}^{n} $  to the orthogonal subspace to $H+M$.

\begin{thm}[{\cite[Theorem~3.1]{DM99}}] \label{T:ML}
The maximum likelihood estimate in the variance components model (\ref{E:ModelVarComp}) exists if and only if
\begin{equation} \label{C:Existence}
\bm{y} \notin \mathcal{M}(\bm{X},\bm{Z}).
\end{equation}

\end{thm}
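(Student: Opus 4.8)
The plan is to work with the reparametrized log-likelihood $\tilde l(\bm\beta,\bm\kappa,\bm y)$ from \eqref{E:LoglikelihoodAlt}, since the minimum over $\bm\beta$ and $\kappa_0$ can be carried out explicitly, leaving a "profile" function of $\bm\kappa$ alone on the set $[0,\infty)^r$. For fixed $\bm\kappa$, minimizing over $\bm\beta$ gives the generalized least squares residual $(\bm y-\bm X\hat{\bm\beta})'\bm{\tilde V}^{-1}(\bm\kappa)(\bm y-\bm X\hat{\bm\beta})$, and then minimizing the resulting expression over $\kappa_0\in(0,\infty)$ yields, up to an additive constant, the profile objective
\[
g(\bm\kappa):=n\log\!\Bigl(\bm y'\bm{\tilde V}^{-1}(\bm\kappa)\bm P^{\perp}_{\bm X,\bm{\tilde V}(\bm\kappa)}\bm y\Bigr)+\log|\bm{\tilde V}(\bm\kappa)|,
\]
where $\bm P^{\perp}_{\bm X,\bm{\tilde V}}$ is the appropriate $\bm{\tilde V}^{-1}$-orthogonal projection onto the complement of $\mathcal M(\bm X)$. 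Thus the ML estimate exists iff $g$ attains its infimum on $[0,\infty)^r$.

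First I would verify that $g$ is continuous on $[0,\infty)^r$ and reduce the existence question to a coercivity/properness statement: $g$ attains its infimum iff $g(\bm\kappa)\to\infty$ whenever $\|\bm\kappa\|\to\infty$ within $[0,\infty)^r$ (the behaviour on bounded sets being automatic by continuity and closedness of the constraint). So the crux is to analyze $\liminf g(\bm\kappa)$ along sequences $\bm\kappa^{(k)}$ with $\|\bm\kappa^{(k)}\|\to\infty$. Writing $\kappa_i^{(k)}=t_k c_i^{(k)}$ with $t_k\to\infty$ and $c^{(k)}$ on the unit simplex (passing to a subsequence so $c^{(k)}\to c$), one sees that $\bm{\tilde V}(\bm\kappa^{(k)})$ behaves like $t_k\sum_i c_i\bm Z_i\bm Z_i'$ on the range of $\bm Z$-columns selected by $c$, while it stays bounded on the orthogonal complement. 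The key linear-algebra fact is that as $t_k\to\infty$, the quadratic form $\bm y'\bm{\tilde V}^{-1}(\bm\kappa^{(k)})\bm P^{\perp}_{\bm X,\bm{\tilde V}}\bm y$ converges to $\bm y'\bm P_{(H_c+M)^{\perp}}\bm y$, where $H_c$ is the span of the active $\bm Z_i$'s; and $\log|\bm{\tilde V}(\bm\kappa^{(k)})|$ grows like $(\dim H_c\setminus M)\log t_k$.

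The main obstacle — and the place where the original proof in \cite{DM99} apparently stumbles — is bookkeeping the competition between the $n\log(\cdot)$ term and the $\log|\bm{\tilde V}|$ term as various subsets of the $\kappa_i$ blow up, especially when $H_c+M\subsetneq H+M$. If $\bm y\notin\mathcal M(\bm X,\bm Z)=M+H$, then for every such limiting configuration $\bm y'\bm P_{(H_c+M)^{\perp}}\bm y\geq\bm y'\bm P_{(H+M)^{\perp}}\bm y=s_{X,Z}>0$, so the $\log(\cdot)$ term stays bounded below while the determinant term diverges to $+\infty$ (or is bounded below when no component blows up relative to another, handled separately), giving $g\to\infty$ and hence existence. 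Conversely, if $\bm y\in M+H$, I would exhibit an explicit direction along which $g$ fails to be proper: by driving all $\kappa_i\to\infty$ at the same rate one makes $\bm y'\bm{\tilde V}^{-1}\bm P^{\perp}\bm y\to 0$ like $t_k^{-1}$, so the $n\log$ term contributes $-n\log t_k$, which dominates the at-most-$(n-m)\log t_k$ — in fact the relevant rank is strictly controlled — growth of the determinant term once one checks the rank inequality carefully; hence $g\to-\infty$ and no minimizer exists. Assembling the two directions, together with the boundary cases where only some $\kappa_i$ diverge, completes the equivalence.
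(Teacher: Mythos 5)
Your profiling strategy is sound and genuinely different in organization from the paper's argument, which works with the full function $\tilde l$ in all of $(\bm{\beta},\kappa_0,\bm{\kappa})$ simultaneously and proves attainment by exhibiting a compact box $C_A$ outside of which $\tilde l\ge A$ (part (b) of Proposition \ref{P:facts}) together with the uniform lower bound of part (a). By minimizing over $\bm{\beta}$ and $\kappa_0$ in closed form first, you sidestep precisely the step where Demidenko and Massam's original construction of $C_A$ goes wrong (the treatment of the $\bm{\beta}$-direction, which the paper repairs by adding the threshold $t$ on $\left\|\bm{\beta}\right\|$); for each fixed $\bm{\kappa}$ the inner minimum is attained whenever $\bm{y}\notin M$, so only coercivity of $g$ on $[0,\infty)^r$ remains. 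What the paper's route buys in exchange is Proposition \ref{P:ML-infty}, the divergence of $\tilde l$ along \emph{any} parameter sequence escaping to the boundary, which does not fall out of the profiled argument. Your treatment of the two directions is essentially correct: when $\bm{y}\in M+H$ the path $\bm{\kappa}=t\bm{1}^{(r)}$ gives $n\log R(t\bm{1}^{(r)})\le -n\log t+O(1)$ against $\log|\bm{\tilde V}(t\bm{1}^{(r)})|=q\log t+O(1)$ with $q=\dim H\le\sum_i k_i<n$, so $g\to-\infty$.

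Two imprecisions are worth fixing, though neither is fatal. First, your claimed limit $R(\bm{\kappa}^{(k)})\to\bm{y}^{\prime}\bm{P}_{(H_c+M)^{\perp}}\bm{y}$ along simplex directions is wrong in general: if, say, $\kappa_1^{(k)}=k$ and $\kappa_2^{(k)}=k^2$, the limiting simplex point $c$ has $c_1=0$ yet $\bm{Z}_1$ is still ``active'' in the limit of the quadratic form. The fix is simpler than your bookkeeping: since $\bm{\tilde V}(\bm{\kappa})$ preserves $H$ and $H^{\perp}$ and acts as the identity on $H^{\perp}$, one has $\bm{w}^{\prime}\bm{\tilde V}^{-1}(\bm{\kappa})\bm{w}\ge\bm{w}^{\prime}\bm{P}_{H^{\perp}}\bm{w}$ for every $\bm{w}$, whence $R(\bm{\kappa})\ge s_{X,Z}$ pointwise for all $\bm{\kappa}$; no limiting argument is needed, and this also delivers part (a) of Proposition \ref{P:facts} directly. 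Second, the determinant term grows like $q\log t_k$ with $q=\dim H$, not ``at most $(n-m)\log t_k$''; the inequality you actually need is $q<n$, which is guaranteed by the model assumption $\sum_{i=1}^r k_i<n$. Finally, the degenerate sub-case $\bm{y}\in M$ (where $R\equiv 0$ and $g$ is undefined) should be dispatched separately by letting $\kappa_0\to 0$ with $\bm{X}\bm{\beta}=\bm{y}$.
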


The proof of this theorem can be sketched as follows:
Theorem \ref{C:Existence} is an immediate consequence of the following facts mentioned in \cite[p.~436--437]{DM99} presented below in the form of
\begin{prop} \label{P:facts}
If for a fixed $ \bm{y} \in \mathbb{R}^{n}$ the condition (\ref{C:Existence}) is satisfied, then:
\begin{enumerate}[(a)]
\item 
$ \tilde l( \bm{\tilde \theta},\bm{y}) \ge n \log s_{X,Z}-n \log n +n$.
\item 
For a given $ A> n \log s_{X,Z}-n \log n +n$  there exists a compact set $ C_{A} \subset \bm{\Theta} $ such that 
\begin{equation}  \label{I:condition}
\tilde l(\bm{\tilde \theta},\bm{y}) \ge A \quad \text{for each} \quad \tilde \theta \in \bm{\tilde \Theta} \setminus C_{A}.
\end{equation}
\end{enumerate}
Otherwise:
\begin{enumerate}[(a)]
\setcounter{enumi}{2}
\item 
The infimum of $ \tilde l(\bm{\tilde \theta},\bm{y})$ in $ \bm{\tilde \Theta} $ is $ - \infty $.
\end{enumerate}

\end{prop}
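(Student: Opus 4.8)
\emph{Proof plan.} The plan is to establish (a)--(c) by a profiling-and-coercivity argument built around the block structure of $\bm{\tilde V}(\bm\kappa)$ relative to the orthogonal decomposition $\mathbb{R}^{n}=N\oplus N^{\perp}$, where $N:=M+H=\mathcal{M}(\bm{X},\bm{Z})$. First I would record the structural observation underlying everything: since $\bm{\tilde V}(\bm\kappa)=\bm{I}_{n}+\sum_{i=1}^{r}\kappa_{i}\bm{Z}_{i}\bm{Z}_{i}^{\prime}$ is $\bm{I}_{n}$ plus a positive semidefinite matrix whose column space lies in $H\subseteq N$, the matrix $\bm{\tilde V}(\bm\kappa)$ (hence also $\bm{\tilde V}^{-1}(\bm\kappa)$) fixes $N^{\perp}$ pointwise and $|\bm{\tilde V}(\bm\kappa)|\ge 1$. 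Since $\bm{X}\bm\beta\in M\subseteq N$ for every $\bm\beta$, decomposing $\bm{y}-\bm{X}\bm\beta=\bm{w}_{N}+\bm{w}_{\perp}$ with $\bm{w}_{\perp}=\bm{P}_{N^{\perp}}\bm{y}$ (independent of $\bm\beta$) and using $\bm{\tilde V}^{-1}(\bm\kappa)\bm{w}_{\perp}=\bm{w}_{\perp}\perp\bm{w}_{N}$ gives
\[
Q(\bm\beta,\bm\kappa):=(\bm{y}-\bm{X}\bm\beta)^{\prime}\bm{\tilde V}^{-1}(\bm\kappa)(\bm{y}-\bm{X}\bm\beta)=\bm{w}_{N}^{\prime}\bm{\tilde V}^{-1}(\bm\kappa)\bm{w}_{N}+s_{X,Z}\ \ge\ s_{X,Z}.
\]
For (a) I would then fix $(\bm\beta,\bm\kappa)$ and minimise $\kappa_{0}\mapsto n\log\kappa_{0}+\kappa_{0}^{-1}Q(\bm\beta,\bm\kappa)$ over $(0,\infty)$; since $Q\ge s_{X,Z}>0$ under (\ref{C:Existence}), the minimiser is $\kappa_{0}=Q/n$ with value $n\log(Q/n)+n$, and combining with $\log|\bm{\tilde V}(\bm\kappa)|\ge 0$ and $Q\ge s_{X,Z}$ yields $\tilde l(\bm{\tilde\theta},\bm{y})\ge n\log(Q/n)+n\ge n\log s_{X,Z}-n\log n+n$.

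For (b) I would show that the sublevel set $S_{A}:=\{\bm{\tilde\theta}\in\bm{\tilde\Theta}:\tilde l(\bm{\tilde\theta},\bm{y})\le A\}$ is a compact subset of $\bm{\tilde\Theta}$ (it is closed, $\tilde l$ being continuous, so it remains to show boundedness and separation from $\{\kappa_{0}=0\}$) and then take $C_{A}:=S_{A}$. I would rule out the escape directions in the order $\bm\kappa$, then $\kappa_{0}$, then $\bm\beta$. \textit{(i)} Profiling out $\bm\beta$ and $\kappa_{0}$ as in (a) gives $\tilde l^{*}(\bm\kappa):=\min_{\bm\beta,\kappa_{0}}\tilde l=n\log(\widehat Q(\bm\kappa)/n)+n+\log|\bm{\tilde V}(\bm\kappa)|\ge n\log(s_{X,Z}/n)+n+\log|\bm{\tilde V}(\bm\kappa)|$, continuous on $[0,\infty)^{r}$; since $|\bm{\tilde V}(\bm\kappa)|\ge 1+\sum_{i=1}^{r}\kappa_{i}\operatorname{tr}(\bm{Z}_{i}\bm{Z}_{i}^{\prime})\to\infty$ as $\|\bm\kappa\|\to\infty$ (here one uses each $\bm{Z}_{i}\ne\bm{0}$, which may be assumed without loss of generality), the set $K_{A}:=\{\bm\kappa:\tilde l^{*}(\bm\kappa)\le A\}$ is compact, and $\tilde l\ge\tilde l^{*}(\bm\kappa)$ forces the $\bm\kappa$-component of $S_{A}$ into $K_{A}$. \textit{(ii)} Over $K_{A}$ the eigenvalues of $\bm{\tilde V}(\bm\kappa)$ lie in $[1,\Lambda]$ for some finite $\Lambda$; moreover (\ref{C:Existence}) gives $\bm{y}\notin M$, so $\|\bm{y}-\bm{X}\bm\beta\|\ge d:=\operatorname{dist}(\bm{y},M)>0$ for all $\bm\beta$, whence on $S_{A}$ one has $A\ge\tilde l\ge n\log\kappa_{0}+\Lambda^{-1}d^{2}\kappa_{0}^{-1}$; the right-hand side tends to $+\infty$ both as $\kappa_{0}\to 0^{+}$ and as $\kappa_{0}\to\infty$, so $\kappa_{0}$ is confined to a compact subinterval of $(0,\infty)$. \textit{(iii)} On $S_{A}$ we then have $\kappa_{0}^{-1}Q(\bm\beta,\bm\kappa)=\tilde l-n\log\kappa_{0}-\log|\bm{\tilde V}(\bm\kappa)|\le A-n\log\kappa_{0}$, bounded above since $\kappa_{0}$ is bounded away from $0$; hence $Q$ is bounded above, and since $Q\ge\Lambda^{-1}\|\bm{y}-\bm{X}\bm\beta\|^{2}$ and $\bm{X}$ has full column rank, $\bm\beta$ is bounded. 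This exhausts the escape directions and proves (b).

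For (c), suppose (\ref{C:Existence}) fails, so $\bm{y}\in N$; write $\bm{y}=\bm{X}\bm\beta_{0}+\bm{h}$ with $\bm{h}\in H$. I would move along the path $\bm\beta=\bm\beta_{0}$, $\bm\kappa=t\bm{1}^{(r)}$, $\kappa_{0}=t^{-1}$, letting $t\to\infty$. Here $\bm{\tilde V}(t\bm{1}^{(r)})=\bm{I}_{n}+t\bm{Z}\bm{Z}^{\prime}$, so $\log|\bm{\tilde V}(t\bm{1}^{(r)})|=\sum_{j=1}^{q}\log(1+t\lambda_{j})\le q\log t+C$ for $t\ge 1$, where $q=\rank(\bm{Z})$ and $\lambda_{1},\dots,\lambda_{q}>0$ are the nonzero eigenvalues of $\bm{Z}\bm{Z}^{\prime}$; while $\kappa_{0}^{-1}\bm{h}^{\prime}\bm{\tilde V}^{-1}(t\bm{1}^{(r)})\bm{h}=t\,\bm{h}^{\prime}(\bm{I}_{n}+t\bm{Z}\bm{Z}^{\prime})^{-1}\bm{h}$ stays bounded because $\bm{h}\in H=\mathcal{M}(\bm{Z}\bm{Z}^{\prime})$, on which $\bm{Z}\bm{Z}^{\prime}$ is positive definite. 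Hence $\tilde l\le(q-n)\log t+C^{\prime}$, and since $q=\rank(\bm{Z})\le\sum_{i=1}^{r}k_{i}<n$ the right-hand side tends to $-\infty$; thus $\inf_{\bm{\tilde\Theta}}\tilde l=-\infty$.

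I expect the main obstacle to be the bookkeeping in (b): the escape directions must be treated in the right order, carrying the uniform eigenvalue bounds for $\bm{\tilde V}(\bm\kappa)$ over $K_{A}$ from one step to the next, and the most delicate point is keeping $\kappa_{0}$ bounded away from $0$ — which is exactly where the hypothesis $\bm{y}\notin M$ (a consequence of (\ref{C:Existence})) is indispensable, since without it the ray construction used for (c) would itself drive $\tilde l$ to $-\infty$, so that neither (a) nor (b) could hold.
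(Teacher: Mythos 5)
Your argument is correct, and it reaches the conclusion by a route that is organized differently from the paper's. For part (b) --- the only part for which the paper supplies a new proof --- the paper constructs an explicit box $C_{A}=[-t,t]^{m}\times[\epsilon,\epsilon^{-1}]\times[0,b]^{r}$ by choosing the thresholds in the order $\epsilon$ (for $\kappa_{0}$, via the bounds $\log|\bm{\tilde V}|\ge 0$ and $Q\ge s_{X,Z}$), then $b$ (for $\bm\kappa$, via $\log|\bm{\tilde V}(\bm\kappa)|\to\infty$), then $t$ (for $\bm\beta$, via the monotonicity $\bm{\tilde V}^{-1}(\bm\kappa)\succeq\bm{\tilde V}^{-1}(b\bm{1}^{(r)})$); you instead take $C_{A}$ to be the sublevel set $\{\tilde l\le A\}$ and prove its compactness by profiling, handling the escape directions in the order $\bm\kappa$, $\kappa_{0}$, $\bm\beta$. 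Both arguments rest on the same three estimates and both get the quantifier ordering right --- which is exactly the point at which the original Demidenko--Massam construction fails, as the paper's counterexample $C^{0}$ shows --- so the difference is one of packaging; your sublevel-set formulation is perhaps more systematic, while the paper's box is more explicit. For parts (a) and (c), where the paper defers to Demidenko--Massam (modulo its correction of their Proposition 3.2 on the block representation of $\bm{\tilde V}(\bm\kappa)$), your arguments are self-contained and sidestep that representation entirely: in (a) you use the orthogonal decomposition of $\bm{y}-\bm{X}\bm\beta$ relative to $M+H$ and the fact that $\bm{\tilde V}^{-1}(\bm\kappa)$ fixes $(M+H)^{\perp}$ pointwise, and in (c) you work on the ray $\bm\kappa=t\bm{1}^{(r)}$, $\kappa_{0}=t^{-1}$, where $\bm{\tilde V}=\bm{I}_{n}+t\bm{Z}\bm{Z}^{\prime}$ diagonalizes directly (this ray is in substance the second half of the corrected Proposition 3.2). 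One caveat you rightly flag: part (b) as literally stated requires every $\bm{Z}_{i}\ne\bm{0}$ (otherwise $\tilde l$ is constant in the corresponding $\kappa_{i}$ and no compact $C_{A}$ exists); the paper makes the same implicit assumption, since its condition (\ref{C:condB}) is likewise unattainable when some $\bm{Z}_{i}=\bm{0}$, and dropping such components loses no generality for Theorem \ref{T:ML} itself.
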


The proofs of the above facts can be found in the course of the proof of Theorem 3.1 in \cite{DM99}. In
the next subsection
we will discuss their correctness.

\subsection{The errors in the proof and their corrections}

\subsubsection{Representation of the scaled covariance matrix}

The parts of the proof of
Theorem 3.1 in \cite{DM99} that correspond to the parts (a) and (c) of Proposition \ref{P:facts} rely on a certain algebraic fact concerning the representation of the scaled covariance matrix
$\bm{\tilde V}(\bm{\kappa})$
of the observation vector $ \bm{Y} $.
Unfortunately, the proof of this fact, stated as
Proposition 3.2 in {\cite{DM99}, is not quite correct.  

\paragraph[Comments on Demidenko and Massam's proof]{Comments on Demidenko and Massam's proof}

We will restate Proposition 3.2 in \cite{DM99} as the following

\begin{prop}[Demidenko and Massam] \label{P:Vkappa}
Let us denote the dimension of the space $ H $ defined in (\ref{E:ZMH}) by $ q $.
There exists an $ n \times n $ orthogonal matrix $ \bm{U} $ and $ q \times q $   matrices $ \bm{A}_{1},\ldots,\bm{A}_{r} $
satisfying the conditions
\begin{equation} \label{E:Amatrices}
\bm{A}_{i} \, \text{is positive definite}, \, i=1,\ldots,r,
\end{equation}
such that
\begin{equation} \label{E:Vkappa}
\bm{\tilde V}(\bm{\kappa})=\bm{U}
\begin{bmatrix}
\bm{I}_{q}+\sum_{i=1}^{r}\kappa_{i}\bm{A}_{i} &\bm{0} \\
\bm{0} &\bm{I}_{n-q}
\end{bmatrix}\bm{U}^{\prime}.
\end{equation}
Moreover, if $ c=\kappa_{1}=\kappa_{2}=\ldots=\kappa_{r} $, there exists an $ n 
\times n $ orthogonal matrix $ \bm{U} $ and a $ q \times q $ diagonal matrix $ 
\bm{D} $ with positive diagonal elements satisfying
\begin{equation} \label{E:Vkappa1}
\bm{\tilde V}(\bm{\kappa})=\bm{U}
\begin{bmatrix}
\bm{I}_{q}+c\bm{D}  & \bm{0}\\
\bm{0} & \bm{I}_{n-q}
\end{bmatrix}\bm{U}^{\prime}.
\end{equation}
\end{prop}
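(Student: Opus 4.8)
The plan is to construct the orthogonal matrix $\bm{U}$ explicitly from an orthonormal basis of $H$ and then simply read off the blocks $\bm{A}_i$. First I would choose an $n\times q$ matrix $\bm{U}_1$ whose columns form an orthonormal basis of $H=\mathcal{M}(\bm{Z})$, and complete it to an orthogonal matrix $\bm{U}=[\bm{U}_1,\bm{U}_2]$, so that the columns of the $n\times(n-q)$ matrix $\bm{U}_2$ form an orthonormal basis of $H^{\perp}$. Since $\mathcal{M}(\bm{Z}_i)\subseteq H$ for each $i$, we have $\bm{U}_2'\bm{Z}_i=\bm{0}$, and since the columns of $\bm{Z}_i$ lie in $H=\mathcal{M}(\bm{U}_1)$ we get $\bm{Z}_i=\bm{U}_1\bm{U}_1'\bm{Z}_i=\bm{U}_1\bm{W}_i$, where $\bm{W}_i:=\bm{U}_1'\bm{Z}_i$ is $q\times k_i$.

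I would then set $\bm{A}_i:=\bm{W}_i\bm{W}_i'$, a symmetric positive semidefinite $q\times q$ matrix. From $\bm{Z}_i\bm{Z}_i'=\bm{U}_1\bm{A}_i\bm{U}_1'$ and $\bm{I}_n=\bm{U}\bm{U}'=\bm{U}_1\bm{U}_1'+\bm{U}_2\bm{U}_2'$ one obtains
\[
\bm{\tilde V}(\bm{\kappa})=\bm{U}_1\Bigl(\bm{I}_q+\sum_{i=1}^{r}\kappa_i\bm{A}_i\Bigr)\bm{U}_1'+\bm{U}_2\bm{I}_{n-q}\bm{U}_2',
\]
which, rewritten in terms of $\bm{U}=[\bm{U}_1,\bm{U}_2]$, is exactly the block factorization (\ref{E:Vkappa}). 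This part is routine linear algebra.

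The step I expect to be the real obstacle --- and, I suspect, the locus of the error the paper is about --- is the claim that each $\bm{A}_i$ is \emph{positive definite}. From $\bm{Z}_i=\bm{U}_1\bm{W}_i$, $\bm{W}_i=\bm{U}_1'\bm{Z}_i$ and $\mathcal{M}(\bm{W}_i\bm{W}_i')=\mathcal{M}(\bm{W}_i)$ one gets $\rank(\bm{A}_i)=\rank(\bm{W}_i)=\rank(\bm{Z}_i)$; moreover this rank is an invariant, unchanged by any choice of $\bm{U}$. Hence $\bm{A}_i$ is positive definite if and only if $\rank(\bm{Z}_i)=q=\dim H$, i.e. $\mathcal{M}(\bm{Z}_i)=H$, which typically fails in the model (\ref{E:ModelVarComp}) (the $\mathcal{M}(\bm{Z}_i)$ are usually distinct proper subspaces of $H$ with $H=\sum_i\mathcal{M}(\bm{Z}_i)$). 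So the proposition as stated is not correct in general; the clean conclusion is (\ref{E:Vkappa}) with the $\bm{A}_i$ only positive \emph{semi}definite (positive definite precisely when $\mathcal{M}(\bm{Z}_i)=H$ for every $i$), and I would expect the corrected proofs of Proposition \ref{P:facts}(a),(c) to use only this semidefinite version.

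For the final assertion, with $c=\kappa_1=\dots=\kappa_r$, no such caveat is needed: then $\sum_i\kappa_i\bm{Z}_i\bm{Z}_i'=c\,\bm{Z}\bm{Z}'$, and $\bm{Z}\bm{Z}'$ is symmetric positive semidefinite with $\mathcal{M}(\bm{Z}\bm{Z}')=\mathcal{M}(\bm{Z})=H$, so it has rank exactly $q$. The spectral theorem then provides an orthogonal $\bm{U}$ with $\bm{Z}\bm{Z}'=\bm{U}\,\blockdiag(\bm{D},\bm{0})\,\bm{U}'$, where $\bm{D}$ is the $q\times q$ diagonal matrix of the nonzero (hence strictly positive) eigenvalues of $\bm{Z}\bm{Z}'$; adding $\bm{I}_n=\bm{U}\bm{U}'$ yields (\ref{E:Vkappa1}) at once.
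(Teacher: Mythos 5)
Your proposal matches the paper's own treatment of this statement: the paper likewise observes that any valid decomposition forces $\rank(\bm{A}_{i})=\rank(\bm{Z}_{i})$, so the positive definiteness in (\ref{E:Amatrices}) fails whenever some $\rank(\bm{Z}_{i})<q$, and it therefore replaces (\ref{E:Amatrices}) by a weakened condition with non-negative definite $\bm{A}_{i}$, exactly as you conclude. The one clause you leave implicit is the paper's added requirement that $\sum_{i=1}^{r}\bm{A}_{i}$ be positive definite --- which does follow from your construction, since $\sum_{i=1}^{r}\bm{A}_{i}=\bm{U}_{1}^{\prime}\bm{Z}\bm{Z}^{\prime}\bm{U}_{1}$ has rank $q$ --- and which is the property the subsequent arguments actually rely on.
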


The proof of
this proposition 
given in \cite[p.~442]{DM99}
is not correct:
The $ q \times q $ matrices $\bm{A}_{1},\ldots,\bm{A}_{r}  $ constructed during the proof satisfy the condition
$\rank(\bm{A}_{i})= \rank(\bm{Z}_{i})$, $i=1,2,\ldots,r $, 
and if there exists $ i $ such that $ \rank(\bm{Z}_{i})<q $
(the ranks of the matrices $ \bm{Z}_{i} $ are not equal in 
the general case)
the matrix $\bm{A}_{i}  $ will have rank less than $ q $ which will contradict 
its positive definiteness.

\paragraph{The correction}
Let us consider the weakened version of Proposition \ref{P:Vkappa} in which the condition (\ref{E:Amatrices}) is replaced by the following condition 
\[
\bm{A}_{i} \, \text{is non-negative definite}, \, i=1,\ldots,r, \, \text{and} \,    \sum_{i=1}^{r}\bm{A}_{r}
\, \text{is positive definite.}
\]
It can be verified that using  the arguments from the proof  of Proposition 3.2 in \cite[p.~442]{DM99} we can obtain 
the mentioned above weakened version of
Proposition \ref{P:Vkappa}
which is sufficient for the
purposes of the proof of \cite[Theorem 3.1]{DM99}.

 \subsubsection{Attaining the supremum of the log-likelihood function}

 \paragraph[Comments on Demidenko and Massam's proof]{Comments on Demidenko and 
 Massam's proof}
 Let us now discuss the part of the proof of Theorem 3.1 in \cite{DM99} that corresponds to the part (b) of Proposition \ref{P:facts}
 and concerns attaining the infimum of the function $ \tilde l $ --- or attaining the supremum of the likelihood function.
It contains the following statement: 
It is possible to
construct a set $ C_{A} $ satisfying the condition
(\ref{I:condition})
 by:
\begin{itemize}
\item choosing $ \epsilon \in (0,1)$ such that 
\begin{align}
\text{for} \quad  \kappa_{0} \in (0,\epsilon), \quad & n \log \kappa_{0}+\kappa_{0}^{-1}s_{X,Z} \ge A; \label{C:cond1} \\
\text{for} \quad \kappa_{0} > \epsilon^{-1}, \quad & n \log \kappa_{0} \ge A; \label{C:cond2}  \\
\text{for} \quad \epsilon \le \kappa_{0} \le  \epsilon^{-1}, \quad & s_{X,Z}
 \ge (A-n \log \epsilon) \epsilon^{-1}; \label{C:cond3}
\end{align}

\item 
choosing any bounded set $ B \subset M $;
\item 
choosing $ b>0 $ such that 
\begin{equation} \label{C:condB}
\text{for} \quad \max_{1 \le i \le r} \kappa_{i} \ge b, \quad \log |\tilde V(\bm {\kappa})| \ge A - n \log \epsilon;
\end{equation}
\item 
putting 
$
C_{A} := \{\bm{\beta}:\bm{X}\bm{\beta} \in B \} \times(\epsilon,\epsilon^{-1}) \times [0,b)^{r},
$
\end{itemize}
see Demidenko and Massam \cite[p.~436]{DM99}.
 
Unfortunately, this claim is not true.
To see this, let us consider a set $ C^{0} $ constructed by:
\begin{itemize}
\item 
choosing $A $ such that
$ A>m+\epsilon_{0} $,
where $ m $ is 
the infimum of $ \tilde l $ in $\bm{ \tilde \Theta }$
while $\epsilon_{0}  $ is a given positive number;

\item 
choosing $ \bm{ \tilde \theta}_{0} = 
(\bm{\beta}_{0},\kappa_{0}^{0},\kappa_{1}^{0},\ldots,\kappa_{r}^{0}) $ such 
that $ \tilde l(\bm{\tilde \theta_{0}}) \le m+\epsilon_{0}  $;
\item 
choosing a bounded set $ B_{0} \subset M$ such that 
$ \bm{X} \bm{\beta}_{0} \notin B_{0} $ ;
\item 
choosing $ \epsilon_{1} \in (0,1) $ and $ b_{1} >0  $ such that for $ 
\epsilon=\epsilon_{1} $,
$ b=b_{1} $ and $ B=B_{0} $ the conditions 
(\ref{C:cond1})--(\ref{C:condB}) are satisfied;
\item putting
\[
C^{0}:=\{\bm{\beta}:\bm{X}\bm{\beta} \in B_{0} \} \times(\epsilon_{1},\epsilon_{1}^{-1}) \times [0,b_{1})^{r}.
\]
\end{itemize}
According to Demidenko and Massam \cite[p.~436]{DM99} the condition (\ref{I:condition}) is satisfied for $ C_{A}=C^{0} $ (for the chosen $ A $).
This implies that  $ \tilde l(\bm{ \tilde \theta_{0}}) \ge A > m+\epsilon_{0}$, and we have obtained a contradiction.

\paragraph{The correction}

We will now present the corrected version of the proof of the part (b) of Proposition \ref{P:facts}. \\
\begin{proof}[Proof of the part (b) of Proposition \ref{P:facts}] 
Let us choose 
\begin{enumerate}[(i)]
\item 
$ \epsilon \in (0,1) $ such that the conditions (\ref{C:cond1}) and (\ref{C:cond2}) are satisfied; 
\item
$ b $ such that the condition (\ref{C:condB}) is satisfied;
\item $ t $ such that
for $ \bm{\beta} \in \mathbb{R}^{m} $ satisfying $\left\| \bm{\beta} \right\| \ge t   $:
\begin{align}
\text{If} \, &  \kappa_{0} \in [\epsilon,\epsilon^{-1}]
\, \text{and} \, 
\max_{1 \le i \le r} \kappa_{i} < b, \, \text{then}  \label{E:kappa}  \\ 
&\kappa_{0}^{-1}(\bm{y}-\bm{X}\bm{\beta})^{\prime}\bm{\tilde V}^{-1}(\bm{\kappa})(\bm{y}-\bm{X}\bm{\beta}) \ge A - n \log \epsilon.
\end{align}

\end{enumerate} 
The existence of $ b $ mentioned in (ii) stems from the fact that 
\[ 
\lim\limits_{\lambda \rightarrow \infty} \log | \bm{\tilde V}(\bm{\kappa}+ \lambda \bm{v})|=\infty,
\]
where $ \bm{v} \in [0,\infty)^{r} \setminus \{ \bm{0}^{(r)} \}   $.
To prove the existence of $ t $ in (iii) we can use the fact that
if the condition (\ref{E:kappa}) is satisfied, then
the matrix
$\bm{V}^{-1}(\bm{\kappa})-\bm{V}^{-1}(b \bm{1}^{(r)} )$
is non-negative definite, see \cite[p.~70]{Rao73}.
It can be seen that $ C_{A}:=[-t,t]^{m} \times [\epsilon,\epsilon^{-1}] \times [0,b]^{r} $ satisfies the condition (\ref{I:condition}).
\end{proof}
From the course of the above proof follows immediately

\begin{prop} \label{P:ML-infty}
Let us assume that the sequence $(\bm{\tilde \theta}_{n})$ of elements of 
the parameter space $ \bm{\tilde \Theta} $,
$ \bm{\tilde \theta}_{n}=(\bm{\beta}^{(n)},\kappa_{0}^{(n)},\bm{\kappa}^{(n)})
$,
satisfies at least one of the following conditions : $\left\| \bm{ \beta}^{(n)} \right\| \rightarrow \infty$, 
$\kappa_{0}^{(n)}\rightarrow0$, $\kappa_{0}^{(n)}\rightarrow\infty$ or $\left\| 
\bm{\kappa}^{(n)} \right\| \rightarrow \infty$. If $ \bm{y} $ satisfies the 
condition (\ref{C:Existence}), then
\[ 
\lim\limits_{n \rightarrow \infty}
\tilde l( \bm{\tilde \theta}_{n}, \bm{y})=\infty. 
\]
\end{prop}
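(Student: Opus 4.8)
The plan is to deduce the statement directly from the compactness property established in the proof of part~(b) of Proposition~\ref{P:facts}. That proof shows that, whenever $\bm{y}$ satisfies (\ref{C:Existence}), for every real number $A$ with $A>n\log s_{X,Z}-n\log n+n$ one can choose positive constants $t$, $\epsilon\in(0,1)$ and $b$ so that the compact set
\[
C_{A}:=[-t,t]^{m}\times[\epsilon,\epsilon^{-1}]\times[0,b]^{r}
\]
satisfies $\tilde l(\bm{\tilde\theta},\bm{y})\ge A$ for every $\bm{\tilde\theta}\in\bm{\tilde\Theta}\setminus C_{A}$. The crucial point is that $C_{A}$ is a Cartesian product in the three blocks $\bm{\beta}$, $\kappa_{0}$ and $\bm{\kappa}$, so each of the four divergence hypotheses forces the sequence $(\bm{\tilde\theta}_{n})$ to leave $C_{A}$.

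First I would fix an arbitrary real number $A$, which we may assume to satisfy $A>n\log s_{X,Z}-n\log n+n$, and pass to the associated constants $t$, $\epsilon$, $b$ and the set $C_{A}$. Then I would verify that under any one of the four hypotheses we have $\bm{\tilde\theta}_{n}\notin C_{A}$ for all $n$ large enough: if $\|\bm{\beta}^{(n)}\|\to\infty$, the vectors $\bm{\beta}^{(n)}$ eventually leave the bounded box $[-t,t]^{m}$; if $\kappa_{0}^{(n)}\to0$ or $\kappa_{0}^{(n)}\to\infty$, the numbers $\kappa_{0}^{(n)}$ eventually leave the interval $[\epsilon,\epsilon^{-1}]$; and if $\|\bm{\kappa}^{(n)}\|\to\infty$, then, by the equivalence of norms on $\mathbb{R}^{r}$ together with $\bm{\kappa}^{(n)}\in[0,\infty)^{r}$, we get $\max_{1\le i\le r}\kappa_{i}^{(n)}\to\infty$, so $\bm{\kappa}^{(n)}$ eventually leaves $[0,b]^{r}$. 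In every case the defining property of $C_{A}$ yields $\tilde l(\bm{\tilde\theta}_{n},\bm{y})\ge A$ for all sufficiently large $n$, and hence $\liminf_{n\to\infty}\tilde l(\bm{\tilde\theta}_{n},\bm{y})\ge A$.

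Since $A$ was an arbitrary (large) real number, letting $A\to\infty$ gives $\lim_{n\to\infty}\tilde l(\bm{\tilde\theta}_{n},\bm{y})=\infty$, which is the assertion of the proposition. I do not expect any genuine obstacle here; the only step that deserves attention is checking that the corrected construction of $C_{A}$ carried out in the proof of part~(b) indeed produces a set of the displayed product form, valid for all $A$ beyond the stated threshold, since it is precisely this structure that makes the four escape conditions translate at once into $\bm{\tilde\theta}_{n}\notin C_{A}$.
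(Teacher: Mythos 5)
Your argument is correct and is essentially the paper's own: the paper derives this proposition "immediately from the course of the above proof," i.e.\ from the corrected construction in part~(b) of Proposition~\ref{P:facts} of the product-form compact set $C_{A}=[-t,t]^{m}\times[\epsilon,\epsilon^{-1}]\times[0,b]^{r}$ outside of which $\tilde l\ge A$. Your observation that each of the four divergence hypotheses forces one coordinate block, and hence the whole point, to leave $C_{A}$ eventually, with $A$ then sent to infinity, is exactly the intended reasoning.
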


\subsection{The case of rank deficient design matrix } \label{S:NonFull}

Throughout the paper, except this subsection, we assume that the design matrix $ \bm{X} $ has a full column rank. It can be shown that
\begin{prop} \label{P:NonFull}
Theorem \ref{T:ML} remains valid 
if the assumption "$\bm{X}$ is a matrix of full column rank" is dropped.
\end{prop}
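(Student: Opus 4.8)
The plan is to reduce the rank-deficient case to the full-rank case already settled in Theorem~\ref{T:ML}. The key observation is that the log-likelihood $\tilde l(\bm\beta,\bm\kappa,\bm y)$ depends on $\bm\beta$ only through $\bm X\bm\beta$, i.e.\ only through the point $\bm X\bm\beta \in M=\mathcal M(\bm X)$. Write $\rank(\bm X)=m_{0}<m$ and choose an $n\times m_{0}$ matrix $\bm X_{0}$ of full column rank with $\mathcal M(\bm X_{0})=\mathcal M(\bm X)=M$. Then the model \eqref{E:ModelVarComp} with design matrix $\bm X$ and the model with design matrix $\bm X_{0}$ induce exactly the same family of distributions for $\bm Y$: both have mean ranging over $M$ and the same covariance structure $\bm V(\bm\sigma^{2})$. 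Consequently the image sets
\[
\{\tilde l(\bm\beta,\bm\kappa,\bm y): (\bm\beta,\kappa_{0},\bm\kappa)\in\bm{\tilde\Theta}\}
\quad\text{and}\quad
\{\tilde l_{0}(\bm\beta_{0},\bm\kappa,\bm y): (\bm\beta_{0},\kappa_{0},\bm\kappa)\in\bm{\tilde\Theta}_{0}\}
\]
coincide, where $\tilde l_{0}$ and $\bm{\tilde\Theta}_{0}$ denote the counterparts of \eqref{E:LoglikelihoodAlt} and the parameter space for the design matrix $\bm X_{0}$. Moreover, $(\bm\beta^{\*},\kappa_{0}^{\*},\bm\kappa^{\*})$ is a minimizer of $\tilde l$ over $\bm{\tilde\Theta}$ if and only if there is a minimizer $(\bm\beta_{0}^{\*},\kappa_{0}^{\*},\bm\kappa^{\*})$ of $\tilde l_{0}$ over $\bm{\tilde\Theta}_{0}$ with $\bm X_{0}\bm\beta_{0}^{\*}=\bm X\bm\beta^{\*}$; since $\bm X_{0}$ has full column rank, such a $\bm\beta_{0}^{\*}$ exists and is unique, and conversely any $\bm\beta^{\*}$ with $\bm X\bm\beta^{\*}=\bm X_{0}\bm\beta_{0}^{\*}$ works (the fibre $\{\bm\beta:\bm X\bm\beta=\bm X_{0}\bm\beta_{0}^{\*}\}$ is nonempty because the two column spaces agree). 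Hence the maximum likelihood estimate exists in the $\bm X$-model if and only if it exists in the $\bm X_{0}$-model.

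The second step is simply to apply Theorem~\ref{T:ML} to the full-column-rank design matrix $\bm X_{0}$: the estimate exists in the $\bm X_{0}$-model if and only if $\bm y\notin\mathcal M(\bm X_{0},\bm Z)$. Finally, since $\mathcal M(\bm X_{0})=\mathcal M(\bm X)$ we have $\mathcal M(\bm X_{0},\bm Z)=\mathcal M(\bm X,\bm Z)$, so the condition is literally \eqref{C:Existence}. Chaining the two equivalences proves Proposition~\ref{P:NonFull}. One should also check the degenerate side condition $m_{0}=\rank(\bm X)<n$ needed so that $\bm X_{0}$ fits the standing hypotheses of Theorem~\ref{T:ML}; but $m_{0}\le m<n$ is part of the original assumptions and is unaffected, and the analogous inequality $\sum k_{i}<n$ for the $\bm Z_{i}$ is untouched.

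The only genuine point requiring care — and the one I would expect to be the main obstacle if any — is making the reduction rigorous at the level of \emph{attainment} rather than just infima: one must verify not merely that $\inf_{\bm{\tilde\Theta}}\tilde l=\inf_{\bm{\tilde\Theta}_{0}}\tilde l_{0}$ but that the infimum is attained on one side exactly when it is attained on the other. This is immediate from the surjection $\bm\beta\mapsto\bm X\bm\beta$ from $\mathbb R^{m}$ onto $M$ composed with the bijection $\bm X_{0}\bm\beta_{0}\leftrightarrow\bm X_{0}\bm\beta_{0}$, but it is worth spelling out so that the reader sees the rank deficiency introduces only a harmless many-to-one reparametrization of the $\bm\beta$-coordinate and changes nothing about the geometry that drives Proposition~\ref{P:ML-infty} and Theorem~\ref{T:ML}. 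Alternatively — and this is perhaps cleaner — one can avoid introducing $\bm X_{0}$ altogether and instead re-run the corrected proof of Proposition~\ref{P:facts}: parts (a) and (c) never used full column rank of $\bm X$ (they involve only $M=\mathcal M(\bm X)$ and the projection $\bm P_{(H+M)^{\perp}}$), and in part~(b) the role of the coercivity condition (iii) on $\|\bm\beta\|$ is replaced by coercivity in $\bm X\bm\beta\in M$, with the compact set taken as $\{\bm\beta:\bm X\bm\beta\in B\}\cap(\text{a bounded set})$ intersected with a bounded box in the remaining coordinates; this works verbatim once one notes $\tilde l$ factors through $\bm X\bm\beta$. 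I would present the $\bm X_{0}$-reduction as the primary argument since it is shortest and exploits Theorem~\ref{T:ML} as a black box.
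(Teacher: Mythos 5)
Your main argument is essentially the paper's own: both proofs replace $\bm{X}$ by a full-column-rank matrix $\bm{X}_{0}$ (the paper calls it $\bm{X}_{1}$) with $\mathcal{M}(\bm{X}_{0})=\mathcal{M}(\bm{X})$, observe that existence of the maximum likelihood estimate is unaffected because the likelihood depends on $\bm{\beta}$ only through $\bm{X}\bm{\beta}\in M$, and then invoke Theorem~\ref{T:ML} together with $\mathcal{M}(\bm{X}_{0},\bm{Z})=\mathcal{M}(\bm{X},\bm{Z})$. You spell out the attainment issue and the side condition $m_{0}<n$ more explicitly than the paper does, which is fine.

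The one point where your reduction genuinely breaks down is the case $\bm{X}=\bm{0}_{n\times m}$, i.e.\ $\rank(\bm{X})=m_{0}=0$. Then $M=\{\bm{0}^{(n)}\}$ and there is no $n\times m_{0}$ matrix of full column rank with that column space that fits the model \eqref{E:ModelVarComp} (which requires a nontrivial $\bm{X}\bm{\beta}$ term), so Theorem~\ref{T:ML} cannot be applied as a black box. The paper treats this case separately: it fixes $\bm{\beta}=\bm{\beta}_{0}$, restricts the parameter space to $\{\bm{\beta}_{0}\}\times(0,\infty)\times[0,\infty)^{r}$, and checks that the analogue of Proposition~\ref{P:facts} still holds --- which is in effect your ``alternative'' route (re-running the corrected proof with coercivity in $\bm{X}\bm{\beta}$ rather than in $\bm{\beta}$) specialized to the degenerate case. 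Adding that one extra case, either by the paper's device or by your alternative argument, closes the gap.
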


\begin{proof}
Let us consider the case $ \bm{X} \neq \bm{0}_{n \times m} $ first. Choose a 
matrix  of full column rank 
$ \bm{X}_{1} $
such that $\mathcal{M}(\bm{X}_{1}) =
\mathcal{M}(\bm{X})$. Observe that for a given realization $ \bm{y} $ of the observation vector $ \bm{Y} $ the maximum likelihood estimate of $ \bm{\sigma^{2}} $
in the new model (with the design matrix $\bm{X}_{1}$) exists if and only if it exists in the model with  the design matrix $ \bm{X} $.
It follows from Theorem
\ref{C:Existence} that the maximum likelihood estimate exists in the former 
model if and only if
$\bm{y} \notin \mathcal{M}(\bm{X}_{1},\bm{Z})=\mathcal{M}(\bm{X},\bm{Z})$,
which completes the proof of the part of the proposition
concerning the case of non-zero design matrix.

Let us now turn our attention to the case $ \bm{X}=\bm{0}_{n \times m} $,
when 
we can assume, without loss of generality, that $\bm{\beta}=\bm{\beta}_{0} $, 
where $\bm{\beta}_{0}  $ is an arbitrarily chosen element of $ \mathbb{R}^{m} $.
We can thus replace the sample space $\bm{\tilde \Theta}$ by 
$\bm{\tilde \Theta}_{1}:=
\{  \{ \bm{\beta}_{0 }\} \times (0,\infty) \times [0,\infty)^{r} \}
$. It can be seen that the analogue of Proposition \ref{P:facts}
corresponding to the "modified sample space" $\bm{\tilde \Theta}_{1}$ holds 
true. This completes the proof.
\end{proof}

\section{The existence of the REML estimate} \label{S:REML}

Let $ \bm{K} $ be an $  n \times (n-m) $ matrix of rank $ n-m $ satisfying the condition 
\begin{equation} \label{E:K}
\bm{K}^{\prime}\bm{X}=\bm{0}_{(n-m) \times m};
\end{equation}
let us recall that throughout the paper, except Subsection \ref{S:NonFull}, we 
assume that the design matrix $ \bm{X} $ has a full column rank. The vector $ 
\bm{K}^{\prime} \bm{Y} $ follows the multivariate normal distribution 
$\mathcal{N}(\bf{0},\bm{K}^{\prime}\bm{V}(\bm{\sigma}^{2})\bm{K})$.
The function
\[ 
l_{K}(\bm{\sigma}^{2},\bm{Y}):=\log\left| \bm{K}^{\prime} \bm{V}(\bm{\sigma}^{2}) \bm{K}  \right| + \bm{Y}^{\prime}\bm{K}(\bm{K}^{\prime} \bm{V}(\bm{\sigma}^{2}) \bm{K}   )^{-1}\bm{K}^{\prime}\bm{Y}   
\]
is equal, up to an additive constant, to minus twice the log-likelihood function  based on the vector $ \bm{K}^{\prime} \bm{Y} $
\cite[p.~13]{Jia07}. Let $ \bm{y} $ be a realization of the observation vector~$ \bm{Y} $.
The REML estimate of the vector of variance components $\bm{ \sigma}^{2} $ is defined as
\begin{equation} \label{E:REMLestimate}
    \argmin_{\sigma^{2} \in \Xi} { l_{K}(\bm{\sigma}^{2},\bm{y})},
\end{equation}
where $\bm {\Xi}:=(0,\infty) \times [0,\infty)^{r}  $.
It can be shown that the set (\ref{E:REMLestimate}) does not depend on the choice of the matrix $ \bm{K} $ satisfying the condition (\ref{E:K}), see \cite[p.~48]{Jia07}. It follows immediately from Proposition \ref{P:NonFull}
that the necessary and sufficient condition for the existence of the REML estimate of the vector $ \bm{ \sigma}^{2}  $ has the form
$ 
\bm{K}^{\prime}\bm{y} \notin \bm{K}^{\prime}H
$,
which is equivalent to
$\bm{K}\bm{K}^{\prime}\bm{y} \notin \bm{K}\bm{K}^{\prime}H$. The last condition is in turn equivalent to 
\begin{equation} \label{C:REML}
\bm{y} \notin \bm{N}H, \quad \text{where} \quad \bm{N}=\bm{I}_{n}-\bm{X}(\bm{X}^{\prime}\bm{X})^{-1}\bm{X}^{\prime}.\end{equation}

We have thus proved Theorem 3.4 from \cite{DM99}, which 
we restate as

\begin{thm} \label{T:REML}
The REML estimate of the vector of the variance components~$ \bm{\sigma}^{2} $
in the variance components model (\ref{E:ModelVarComp}) exists if and only if
the condition (\ref{C:REML}) is satisfied.
\end{thm}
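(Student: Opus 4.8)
The plan is to realise the REML estimate of $\bm{\sigma}^{2}$ as the maximum likelihood estimate of the variance components in an auxiliary model with no fixed effects, and then to apply Proposition~\ref{P:NonFull} in its degenerate, zero-design-matrix case. To this end I would fix an $n\times(n-m)$ matrix $\bm{K}$ of rank $n-m$ with $\bm{K}'\bm{X}=\bm{0}_{(n-m)\times m}$ and, in addition, with orthonormal columns, so that $\bm{K}'\bm{K}=\bm{I}_{n-m}$, $\mathcal{M}(\bm{K})=\mathcal{M}(\bm{X})^{\perp}$, $\ker\bm{K}'=\mathcal{M}(\bm{X})$ and $\bm{K}\bm{K}'=\bm{N}$. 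With such a $\bm{K}$ the vector $\bm{K}'\bm{Y}$ has distribution $\mathcal{N}(\bm{0},\bm{K}'\bm{V}(\bm{\sigma}^{2})\bm{K})$ with
\[
\bm{K}'\bm{V}(\bm{\sigma}^{2})\bm{K}=\sigma_{0}^{2}\bm{I}_{n-m}+\sum_{i=1}^{r}\sigma_{i}^{2}(\bm{K}'\bm{Z}_{i})(\bm{K}'\bm{Z}_{i})',
\]
so $\bm{K}'\bm{Y}$ obeys a variance components model of the form (\ref{E:ModelVarComp}) with sample size $n-m$, design matrix $\bm{0}$, and random-effects matrices $\bm{Z}_{i}^{\ast}:=\bm{K}'\bm{Z}_{i}$. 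Consequently $l_{K}(\bm{\sigma}^{2},\bm{y})$ equals, up to the additive constant, minus twice the log-likelihood of this auxiliary model evaluated at the observation $\bm{K}'\bm{y}$, and the set (\ref{E:REMLestimate}) is exactly the set of maximum likelihood estimates of $\bm{\sigma}^{2}$ in the auxiliary model.

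I would then apply Proposition~\ref{P:NonFull} to the auxiliary model, which is precisely of the zero-design-matrix type treated in the second half of the proof of that proposition (where the analogue of Proposition~\ref{P:facts} for the reduced parameter space $\bm{\tilde\Theta}_{1}$ is used). One should note that the relevant existence dichotomy depends on the auxiliary model only through the column space of its random-effects part, via the corrected Proposition~\ref{P:Vkappa}, and not through the number of columns; so the conclusion survives even if the bound $\sum_{i}k_{i}<n-m$ fails, and one may, if convenient, first discard any $\bm{Z}_{i}^{\ast}$ that vanish (those with $\mathcal{M}(\bm{Z}_{i})\subseteq\mathcal{M}(\bm{X})$), since they leave $\mathcal{M}(\bm{K}'\bm{Z})$ and the existence question untouched. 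The upshot is that the REML estimate exists if and only if $\bm{K}'\bm{y}\notin\mathcal{M}(\bm{K}'\bm{Z}_{1},\dots,\bm{K}'\bm{Z}_{r})=\bm{K}'H$.

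It then remains to convert $\bm{K}'\bm{y}\notin\bm{K}'H$ into a statement about $\bm{y}$ itself. Since $\bm{K}$ has full column rank it is injective, so $\bm{K}'\bm{y}\in\bm{K}'H$ is equivalent to $\bm{K}\bm{K}'\bm{y}\in\bm{K}\bm{K}'H$; substituting $\bm{K}\bm{K}'=\bm{N}$ and using $\ker\bm{K}'=\mathcal{M}(\bm{X})$ rewrites this as condition (\ref{C:REML}). Since, as recalled before the statement of the theorem, the set (\ref{E:REMLestimate}) does not depend on the choice of $\bm{K}$ satisfying (\ref{E:K}), the criterion obtained is intrinsic, and Theorem~\ref{T:REML} follows.

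The reduction and the appeal to Proposition~\ref{P:NonFull} are essentially bookkeeping; the step I expect to require the most care is the final linear-algebra translation, where one must keep careful track of the identifications $\mathcal{M}(\bm{K})=\mathcal{M}(\bm{X})^{\perp}$, $\ker\bm{K}'=\mathcal{M}(\bm{X})$ and $\bm{K}\bm{K}'=\bm{N}$ and verify that, together, they turn the support condition on the error contrasts $\bm{K}'\bm{y}$ exactly into condition (\ref{C:REML}).
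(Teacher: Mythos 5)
Your proposal follows the paper's own route essentially verbatim: reduce REML to maximum likelihood in the transformed model for $\bm{K}^{\prime}\bm{Y}$, which has zero design matrix, invoke Proposition~\ref{P:NonFull} to get the criterion $\bm{K}^{\prime}\bm{y}\notin\bm{K}^{\prime}H$, and translate via the injectivity of $\bm{K}$ and $\bm{K}\bm{K}^{\prime}=\bm{N}$ into condition~(\ref{C:REML}). Your added care (taking $\bm{K}$ with orthonormal columns so the transformed errors stay spherical, and noting that vanishing $\bm{K}^{\prime}\bm{Z}_{i}$ or the count $\sum_i k_i$ do not affect the dichotomy) is a welcome tightening of details the paper leaves implicit, but it is the same argument.
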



It can be also verified that the following analogue of Proposition (\ref{P:ML-infty}) holds true.

\begin{prop}
Let us assume that the sequence $ (\bm{\xi}_{n})$ of elements of the parameter space  $\bm {\Xi}$, 
$ \bm{\xi}_{n}=(\xi_{0}^{(n)},\xi_{1}^{(n)},
\ldots,\xi_{r}^{(n)}) $, satisfies at least one of the following conditions:
\begin{align} 
\lim\limits_{n \rightarrow \infty} \frac{\xi_{k}^{(n)}}{\xi_{0}^{(n)}}=\infty \quad \text{for some} \quad k \in \{1,\ldots,r  \},  \\
\lim\limits_{n \rightarrow \infty} \xi_{0}^{(n)}=0 \quad \text{or} \quad 
\lim\limits_{n \rightarrow \infty} \xi_{0}^{(n)}=\infty.
\end{align}
If $ \bm{y} $ satisfies the condition (\ref{C:REML}), then
\[
\lim\limits_{n \rightarrow \infty} l_{K}(  \bm{\xi}_{n},\bm{y})=\infty.
\]
\end{prop}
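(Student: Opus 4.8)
The plan is to reduce this to the (zero design matrix version of the) results of Section~\ref{S:Existence}, in exactly the manner by which condition~(\ref{C:REML}) was obtained above. First I would remove the dependence on the particular choice of $\bm{K}$. Any two $n\times(n-m)$ matrices $\bm{K}$, $\bm{K}_{0}$ of rank $n-m$ satisfying (\ref{E:K}) have columns spanning the same subspace $M^{\perp}$, hence $\bm{K}=\bm{K}_{0}\bm{T}$ for some invertible $(n-m)\times(n-m)$ matrix $\bm{T}$; a one-line computation then gives $l_{\bm{K}_{0}\bm{T}}(\bm{\sigma}^{2},\bm{Y})=l_{\bm{K}_{0}}(\bm{\sigma}^{2},\bm{Y})+2\log|\det\bm{T}|$, so the two functions differ by an additive constant on the whole parameter space. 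It therefore suffices to prove the claim for one fixed $\bm{K}$, and I would take $\bm{K}$ with orthonormal columns, so that $\bm{K}^{\prime}\bm{K}=\bm{I}_{n-m}$.

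With such a $\bm{K}$ one has
\[
\bm{K}^{\prime}\bm{V}(\bm{\sigma}^{2})\bm{K}=\sigma_{0}^{2}\bm{I}_{n-m}+\sum_{i=1}^{r}\sigma_{i}^{2}\bm{Z}_{i}^{*}\bm{Z}_{i}^{*\prime},\qquad\bm{Z}_{i}^{*}:=\bm{K}^{\prime}\bm{Z}_{i},
\]
and, writing $\bm{y}^{*}:=\bm{K}^{\prime}\bm{y}$, the function $l_{K}(\bm{\sigma}^{2},\bm{y})$ coincides with minus twice the log-likelihood (up to an additive constant) of the variance components model of dimension $n-m$ with \emph{zero} design matrix, with $\bm{Z}_{i}$ replaced by $\bm{Z}_{i}^{*}$ and with observation vector $\bm{y}^{*}$; note that $\bm{K}^{\prime}\bm{\epsilon}\sim\mathcal{N}(\bm{0},\sigma_{0}^{2}\bm{I}_{n-m})$, so this is a genuine instance of~(\ref{E:ModelVarComp}). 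Passing to the parametrization (\ref{E:param1})--(\ref{E:param2}), i.e. $\kappa_{0}=\sigma_{0}^{2}$, $\kappa_{i}=\sigma_{i}^{2}/\sigma_{0}^{2}$, the hypotheses on the sequence $(\bm{\xi}_{n})$ become: $\kappa_{0}^{(n)}\to0$, or $\kappa_{0}^{(n)}\to\infty$, or $\kappa_{k}^{(n)}\to\infty$ for some $k$ (hence $\|\bm{\kappa}^{(n)}\|\to\infty$); and $\bm{y}^{*}\notin\mathcal{M}(\bm{Z}^{*})=\bm{K}^{\prime}H$ is, by the equivalences already recorded above, the same as condition~(\ref{C:REML}).

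It then remains to apply, to this reduced model and the sequence $(\kappa_{0}^{(n)},\bm{\kappa}^{(n)})$, the analogue of Proposition~\ref{P:ML-infty} for a model with zero design matrix. That analogue holds by the same argument as Proposition~\ref{P:ML-infty} itself, carried out over the ``modified sample space'' $\bm{\tilde\Theta}_{1}$ as in the proof of Proposition~\ref{P:NonFull}; I would simply point out that the corrected proof of part~(b) of Proposition~\ref{P:facts}, and hence Proposition~\ref{P:ML-infty}, uses only the existence condition~(\ref{C:Existence}), the divergence $\log|\bm{\tilde V}(\bm{\kappa}+\lambda\bm{v})|\to\infty$, and matrix monotonicity, none of which involves the standing assumption $\sum_{i}k_{i}<n$ --- so the argument goes through verbatim for the reduced model even though there $\sum_{i}k_{i}$ need not be less than $n-m$. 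This delivers $l_{K}(\bm{\xi}_{n},\bm{y})\to\infty$. I do not expect a serious obstacle here: the whole proof is a change of variables plus a citation of Section~\ref{S:Existence}, and the only place calling for a little care is the first step --- verifying that $l_{K}$ is, up to an additive constant, independent of the choice of $\bm{K}$, which legitimizes the passage to the orthonormal case.
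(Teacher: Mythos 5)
Your proposal is correct and follows exactly the route the paper intends: the paper states this proposition without proof ("it can be verified"), but its derivation of Theorem~\ref{T:REML} proceeds by the very same reduction of $l_{K}$ to a zero-design-matrix instance of the model for $\bm{K}^{\prime}\bm{Y}$, to which the results of Section~\ref{S:Existence} (here, the analogue of Proposition~\ref{P:ML-infty} over the modified parameter space $\bm{\tilde \Theta}_{1}$) are applied, and your preliminary steps (invariance of $l_{K}$ up to an additive constant under the choice of $\bm{K}$, reduction to orthonormal columns, irrelevance of the condition $\sum_{i}k_{i}<n$) are all sound. The only caveat, inherited from the statement of the proposition itself rather than from your argument, is the implicit assumption $\bm{K}^{\prime}\bm{Z}_{k}\neq\bm{0}$ (equivalently $\mathcal{M}(\bm{Z}_{k})\not\subseteq M$), without which the term $\log|\bm{\tilde V}(\bm{\kappa})|$ in the reduced model need not diverge as $\kappa_{k}\rightarrow\infty$.
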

It might be expected that this proposition, as well as Proposition \ref{P:ML-infty}, will prove to be useful for future research concerning maximum likelihood estimation in variance components models.

\end{document}